\theoremstyle{plain}
\newtheorem{thm}{Theorem}[section]
\newtheorem*{thm*}{Theorem}
\newtheorem*{mainthm*}{Main Theorem}
\newtheorem*{mainlem*}{Main Lemma}
\newtheorem{lem}[thm]{Lemma} \newtheorem*{lem*}{Lemma}
\newtheorem{claim}[thm]{Claim} \newtheorem*{claim*}{Claim}
 \newtheorem*{cor*}{Corollary}
 \newtheorem*{prop*}{Proposition}
\theoremstyle{definition}
\newtheorem{defn}[thm]{Definition} \newtheorem*{defn*}{Definition}
\theoremstyle{remark}
\newtheorem{rem}[thm]{Remark} \newtheorem*{rem*}{Remark}
 \newtheorem*{example*}{Example}
 \newtheorem*{conj*}{Conjecture}
 \newtheorem*{question*}{Question}
\newcommand{\Ord}{\mathrm{Ord}}
\begin{document}

\author{Trevor M.\ Wilson}
\title{A game-theoretic proof of Shelah's theorem on labeled trees}

\address{Department of Mathematics\\Miami University\\Oxford, Ohio 45056\\USA}
\email{twilson@miamioh.edu} 
% \urladdr{https://www.users.miamioh.edu/wilso240}

\begin{abstract}
 We give a new proof of a theorem of Shelah which states that for every family of labeled trees, if the cardinality $\kappa$ of the family is much larger (in the sense of large cardinals) than the cardinality $\lambda$ of the set of labels, more precisely if the  partition relation $\kappa \to (\omega)^{\mathord{<}\omega}_\lambda$ holds, then there is a homomorphism from one labeled tree in the family to another. Our proof uses a characterization of such homomorphisms in terms of games.
\end{abstract}

\maketitle

\section{Introduction}

We work in ZFC. For every set $X$ we write $X^{\mathord{<}\omega}$ for the set of all finite sequences from $X$ and $[X]^{\mathord{<}\omega}$ for the set of all finite subsets of $X$.  For every natural number $n < \omega$ we write $X^n$ for the set of all $n$-length sequences from $X$ and $[X]^n$ for the set of all $n$-element subsets of $X$. When $X$ is a set of ordinals we may identify each element of $[X]^{\mathord{<}\omega}$ with its increasing enumeration, which is an element of $X^{\mathord{<}\omega}$.

For a nonzero cardinal $\lambda$ and an infinite cardinal $\kappa$, the partition relation $\kappa \to (\omega)^{\mathord{<}\omega}_\lambda$ means that for every function $f : [\kappa]^{\mathord{<}\omega} \to \lambda$ there is an infinite subset $H \subset \kappa$ that is \emph{homogeneous} for $f$, meaning that $f$ is constant on $[H]^n$ for all $n < \omega$. When $\lambda = 1$ this partition relation holds trivially for every infinite cardinal $\kappa$, but already when $\lambda = 2$, the least $\kappa$ such that $\kappa \to (\omega)^{\mathord{<}\omega}_\lambda$ is a large cardinal known as the Erd\H{o}s cardinal $\kappa(\omega)$.

A \emph{tree} on a set $X$ is a nonempty subset of $X^{\mathord{<}\omega}$ closed under initial segments, and the \emph{root} of such a tree is the empty sequence $\langle \rangle$.\footnote{For a more abstract approach, we could equivalently use rooted trees in the sense of graph theory.} A \emph{homomorphism} of trees is a function from one tree to another that preserves lengths of sequences and the initial segment relation. (Equivalently, it preserves the root and the predecessor relation.)

For a nonzero cardinal $\lambda$, a \emph{$\lambda$-labeled tree} (or \emph{$\lambda$-colored tree}) is a structure $\mathcal{T} = \langle T; \ell^{\mathcal{T}}\rangle$ where $T$ is a tree on some set $X$ and $\ell^\mathcal{T}$ is a function from $T$ to $\lambda$. A \emph{homomorphism} from a $\lambda$-labeled tree  $\mathcal{T} = \langle T; \ell^{\mathcal{T}}\rangle$ to a $\lambda$-labeled tree $\mathcal{U} = \langle U; \ell^{\mathcal{U}}\rangle$ is a homomorphism of trees $h : T \to U$ that preserves labels, meaning $\ell^{\mathcal{U}} \circ h = \ell^{\mathcal{T}}$.

We now state a theorem connecting partition relations to labeled trees. It follows from results of Shelah \cite{shelah1982better}; see Eklof and Shelah \cite[Theorem 2.1]{eklof1999absolutely} for a similar statement.
The converse also holds; Herden \cite{herden2012upper} is a good reference containing proofs of both directions.

\begin{thm}[Shelah]\label{thm:Shelah}
 Let $\kappa$ be an infinite cardinal and let $\lambda$ be a nonzero cardinal. If the partition relation $\kappa \to (\omega)^{\mathord{<}\omega}_\lambda$ holds, then for every sequence of $\lambda$-labeled trees $\langle \mathcal{T}_\alpha : \alpha < \kappa\rangle$ there is a homomorphism $\mathcal{T}_\alpha \to \mathcal{T}_\beta$ for some $\alpha < \beta < \kappa$.
\end{thm}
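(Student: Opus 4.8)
The plan is to characterize the existence of a label-preserving homomorphism by a game, observe that the game is determined, and then run the partition relation against that game. For $\lambda$-labeled trees $\mathcal{T} = \langle T; \ell^{\mathcal{T}}\rangle$ and $\mathcal{U} = \langle U; \ell^{\mathcal{U}}\rangle$ I would define a two-player game $G(\mathcal{T}, \mathcal{U})$ in which the players cooperatively descend matching branches. At round $i$ the first player plays $x_i$ with $\langle x_0, \dots, x_i\rangle \in T$, and then the second player must answer with $y_i$ such that $\langle y_0, \dots, y_i\rangle \in U$ and $\ell^{\mathcal{U}}(\langle y_0, \dots, y_i\rangle) = \ell^{\mathcal{T}}(\langle x_0, \dots, x_i\rangle)$ (with the matching of root labels built in as an opening condition). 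The second player wins a run exactly when she is never stuck, i.e.\ the run is infinite or the first player reaches a leaf of $T$ with all labels matched so far. The first player wins precisely when at some finite stage the second player has no legal response. The key asymmetry is that the first player \emph{exposes} a node of the source $T$ while the second player must \emph{produce} a node of the target $U$; so a strategy for the second player, read as a function of the exposed source node, is itself a candidate homomorphism.

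The first technical step is a lemma: the second player has a winning strategy in $G(\mathcal{T}, \mathcal{U})$ if and only if there is a homomorphism $\mathcal{T} \to \mathcal{U}$. The backward direction is immediate, since the second player can simply follow the homomorphism, always answering with the last coordinate of $h(\langle x_0, \dots, x_i\rangle)$, and length-, initial-segment-, and label-preservation guarantee legality. For the forward direction I would set $h(\langle x_0, \dots, x_{n-1}\rangle)$ equal to the sequence of responses dictated by a winning strategy $\sigma$; because in this game the second player's move at round $i$ depends only on the position (the exposed source node $\langle x_0, \dots, x_i\rangle$), the map $h$ is well defined on all of $T$, and the fact that $\sigma$ never gets stuck against any legal descent makes $h$ total, length- and initial-segment-preserving, and label-preserving, hence a homomorphism.

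Next I would observe that the second player's losing set is open (she loses, if at all, at a finite stage), so her winning set is closed and the game is determined by the Gale–Stewart theorem. Combined with the lemma, this gives the dichotomy: either there is a homomorphism $\mathcal{T}_\alpha \to \mathcal{T}_\beta$, or the first player wins $G(\mathcal{T}_\alpha, \mathcal{T}_\beta)$, and in the latter case closedness supplies an ordinal rank $r_{\alpha\beta}$ on positions measuring how quickly the first player can force the second player to be stuck. I would then argue by contradiction, assuming there is no homomorphism $\mathcal{T}_\alpha \to \mathcal{T}_\beta$ for any $\alpha < \beta$, so that the first player wins every such game and all the ranks $r_{\alpha\beta}$ are defined. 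The idea is to define a coloring $c : [\kappa]^{\mathord{<}\omega} \to \lambda$ by running a canonical, rank-guided descent: given an increasing tuple, play $G(\mathcal{T}_{\alpha_0}, \mathcal{T}_{\alpha_1})$ with the first player choosing a rank-minimizing child $s \conc \langle x\rangle$ at each step and the responses chosen rank-maximally among matched children, and record a label encountered at a position indexed by the size of the tuple. Applying $\kappa \to (\omega)^{\mathord{<}\omega}_\lambda$ yields an infinite homogeneous set $H$, on which these recorded labels become independent of the particular later indices used.

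The main obstacle, and the technical heart of the argument, is designing $c$ so that this homogeneity actually produces a contradiction. Concretely, I would use the coherence of the recorded labels across tuples from $H$ to glue the individual rank-bounded descents into a single globally consistent, label- and initial-segment-preserving map between two fixed trees of $H$ — that is, into a winning strategy for the second player — contradicting that the first player wins that game; equivalently, the homogeneity should be arranged to force the ordinals $r_{\alpha\beta}$ into a strictly descending $\omega$-sequence, which is impossible. The delicate point is that the finite descents produced for different tuples must fit together into a \emph{total} map on the source tree with all labels matched and the tree structure respected, and verifying this well-definedness — that homogeneity at each arity removes the dependence on the auxiliary indices — is where I expect the real work to lie, the game characterization and determinacy being the clean structural scaffolding that makes it possible.
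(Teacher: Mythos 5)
Your scaffolding is exactly the paper's: the game $G(\mathcal{T},\mathcal{U})$, the equivalence between winning strategies for the second player and homomorphisms, and the use of Gale--Stewart determinacy for closed games to convert ``no homomorphism'' into a winning strategy $\Sigma_{\alpha\beta}$ for the first player are all correct and match the paper verbatim in substance. But the heart of the proof --- the definition of the coloring and the extraction of a contradiction from homogeneity --- is missing, and you say so yourself (``where I expect the real work to lie''). Worse, the sketch you do give points in a direction that does not work. Your rank-guided descent in the single game $G(\mathcal{T}_{\alpha_0},\mathcal{T}_{\alpha_1})$, recording ``a label at a position indexed by the size of the tuple,'' makes the color of $\langle \alpha_0,\ldots,\alpha_n\rangle$ depend only on $\alpha_0$, $\alpha_1$, and $n$; homogeneity then tells you almost nothing, because the later coordinates never enter the construction. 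The fallback of forcing the ranks $r_{\alpha\beta}$ into a descending $\omega$-sequence also cannot be implemented as stated: the coloring must take values in $\lambda$, and $\lambda$ colors cannot encode ordinal rank information, while the gluing of finitely many partial descents into a total homomorphism is precisely the statement being proved, not a tool available for proving it.

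The missing idea is strategy composition, as in the first periodicity theorem: given $\langle \alpha_0,\ldots,\alpha_n\rangle$, play the first player's winning strategies $\Sigma_{\alpha_i\alpha_{i+1}}$ for $i < n$ \emph{against each other} in a staircase, copying each move the strategy $\Sigma_{\alpha_{i+1}\alpha_{i+2}}$ produces in $G(\mathcal{T}_{\alpha_{i+1}},\mathcal{T}_{\alpha_{i+2}})$ as the second player's reply in $G(\mathcal{T}_{\alpha_i},\mathcal{T}_{\alpha_{i+1}})$. This produces an $n\times n$ triangular array, and $f(\langle \alpha_0,\ldots,\alpha_n\rangle)$ is defined as the label in $\mathcal{T}_{\alpha_0}$ of the length-$n$ sequence in the top row (with a default value when some copied move breaks a rule). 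Now every coordinate of the tuple genuinely participates, and homogeneity of an infinite $H = \{\alpha_0 < \alpha_1 < \cdots\}$ yields exactly the shift-invariance $f(\langle \alpha_i,\ldots,\alpha_{n+i-1}\rangle) = f(\langle \alpha_{i+1},\ldots,\alpha_{n+i}\rangle)$, which is precisely the label-matching condition needed to legalize each copied move; an induction on $n$ then shows all finite intervals of $H$ generate rule-abiding arrays. Extending the staircase through all of $H$ gives an infinite run of each game $G(\mathcal{T}_{\alpha_i},\mathcal{T}_{\alpha_{i+1}})$ in which no rule is ever broken --- a win for the second player against the supposedly winning strategy $\Sigma_{\alpha_i\alpha_{i+1}}$, which is the contradiction. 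No ranks are needed anywhere; the closed-game rank analysis you invoke is dispensable once the composition trick is in place.
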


\begin{rem}
 In the simplest case $\lambda = 1$, this theorem reduces to the statement that for every sequence of (unlabeled) trees $\langle T_i : i < \omega\rangle$ there is a homomorphism $T_i \to T_j$ for some $i < j < \omega$. This is true because otherwise the ranks of the trees would form an infinite decreasing sequence in $\Ord \cup \{\infty\}$, which is a contradiction. Nevertheless, it may be interesting to observe that our game-theoretic proof works in this case also.
\end{rem}

Existing proofs of Theorem \ref{thm:Shelah} rely on the Nash-Williams theory of better-quasi-orderings.
We will give a short and simple proof avoiding this theory entirely, instead using the games defined in the next section.

\section{The game $G(\mathcal{T},\mathcal{U})$}

In this section we let $\lambda$ be a nonzero cardinal and let $\mathcal{T} = \langle T, \ell^\mathcal{T}\rangle$ and $\mathcal{U} = \langle U, \ell^\mathcal{U}\rangle$ be $\lambda$-labeled trees with $T \subset X^{\mathord{<}\omega}$ and  $U \subset Y^{\mathord{<}\omega}$ for some sets $X$ and $Y$. In the following game, we can think of the second player as continuously building finite partial homomorphisms from $\mathcal{T}$ to $\mathcal{U}$ in response to challenges from the first player.

\begin{defn}
 The game $G(\mathcal{T},\mathcal{U})$ is defined as follows. Players I and II alternately play elements of $X$ and $Y$ respectively for $\omega$ rounds:
 \begin{center}
  \begin{tabular}{c|cccccccc}
  I  & $x_0$ &       & $x_1$ &       & $x_2$ &       & $\ldots$ & \\
  II &       & $y_0$ &       & $y_1$ &       & $y_2$ &          & $\ldots$
 \end{tabular}
 \end{center}
 The players are subject to the following rules for all $n<\omega$:
 \begin{description}
  \item[Tree membership for player I] $\langle x_0,\ldots, x_n \rangle \in T$.
  \item[Tree membership for player II] $\langle y_0,\ldots, y_n \rangle \in U$.
  \item[Label-matching for player II] $\ell^{\mathcal{U}}(\langle y_0,\ldots, y_n \rangle) = \ell^{\mathcal{T}}(\langle x_0,\ldots, x_n \rangle)$.
 \end{description}
 The first player to break a rule loses. If both players follow the rules forever, then player II wins. If the roots of $\mathcal{T}$ and $\mathcal{U}$ have different labels, we say that player II loses immediately (this can be considered as the $n = -1$ case of the label-matching rule.)
\end{defn}

For notational convenience we allow play to continue after a rule is broken, even though it cannot affect the outcome. The fact that player II is considered the winner if both players follow the rules forever means that $G(\mathcal{T},\mathcal{U})$ is a \emph{closed game} for player II.

A \emph{winning strategy for player I} in $G(\mathcal{T},\mathcal{U})$ is a function $\Sigma : Y^{\mathord{<}\omega} \to X$ such that for every infinite sequence $\langle y_n : n < \omega\rangle$ of moves for player II, if $x_n = \Sigma(\langle y_0,\ldots,y_{n-1}\rangle)$ for all $n < \omega$, then player I wins.  Similarly (with a minor change to account for player I moving first) a \emph{winning strategy for player II} in $G(\mathcal{T},\mathcal{U})$ is a function $\Sigma : X^{\mathord{<}\omega} \setminus \{\langle \rangle\} \to Y$ such that for every infinite sequence $\langle x_n : n < \omega\rangle$ of moves for player I, if $y_n = \Sigma(\langle x_0,\ldots,x_n\rangle)$ for all $n < \omega$ then player II wins.

We will need the following simple observation relating winning strategies in this game to homomorphisms. (They are essentially just different notations for the same thing.)

\begin{lem}\label{lem:strategy-hom-equiv}
 There is a homomorphism from $\mathcal{T}$ to $\mathcal{U}$ if and only if player II has a winning strategy in the game $G(\mathcal{T},\mathcal{U})$.
\end{lem}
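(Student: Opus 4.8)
The plan is to prove the two directions of the biconditional by exhibiting explicit, mutually inverse translations between homomorphisms $h : T \to U$ and winning strategies $\Sigma$ for player II, as the remark suggests. In each direction the whole content is that ``$h$ preserves the length, initial-segment, and label structure'' says exactly the same thing as ``$\Sigma$ keeps player II following the rules for as long as player I does.''

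First I would treat the direction from homomorphisms to strategies. Given a homomorphism $h$, I would define $\Sigma(\langle x_0, \ldots, x_n \rangle)$, for $\langle x_0, \ldots, x_n \rangle \in T$, to be the last entry of the sequence $h(\langle x_0, \ldots, x_n \rangle) \in U$, which has length $n+1$; on sequences not in $T$ I would let $\Sigma$ take any value, since on such sequences player I has already broken a rule and player II's response cannot affect the outcome. The key observation to verify is that if player I plays a sequence $\langle x_0, \ldots, x_n \rangle \in T$ and player II follows $\Sigma$, then player II's position is exactly $\langle y_0, \ldots, y_n \rangle = h(\langle x_0, \ldots, x_n \rangle)$. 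This holds because $h$ preserves lengths and initial segments, so $h(\langle x_0, \ldots, x_i \rangle)$ is the initial segment of $h(\langle x_0, \ldots, x_n \rangle)$ of length $i+1$, and hence its last entry $y_i$ is the $i$-th entry of $h(\langle x_0, \ldots, x_n \rangle)$. Since $h$ maps into $U$, preserves labels, and sends the root to the root (so the root labels agree and player II does not lose immediately), player II's moves are legal at every round at which player I's are. Consequently, if player I follows the rules forever then so does player II and player II wins, while if player I first breaks a rule at some round $m$, then, since player II was legal through round $m-1$ and player I moves first in round $m$, player I is the first to break a rule and loses. Either way $\Sigma$ is winning.

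For the converse I would build a homomorphism from a winning strategy $\Sigma$. For each $s = \langle x_0, \ldots, x_{n-1} \rangle \in T$ I would let player I play the entries of $s$, let player II respond according to $\Sigma$ by setting $y_i = \Sigma(\langle x_0, \ldots, x_i \rangle)$, and define $h(s) = \langle y_0, \ldots, y_{n-1} \rangle$. Length preservation is then immediate, and initial-segment preservation holds because each $y_i$ depends only on the initial segment $\langle x_0, \ldots, x_i \rangle$, so that $h(s)$ is an initial segment of $h(t)$ whenever $s$ is. The main point, and the step I expect to be the only real obstacle, is showing $h(s) \in U$ with $\ell^{\mathcal{U}}(h(s)) = \ell^{\mathcal{T}}(s)$, that is, that player II's responses are legal in rounds $0, \ldots, n-1$; this requires converting the winning condition, stated over infinite plays, into a statement about the finite play determined by $s$. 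I would do this by extending player I's moves $x_0, \ldots, x_{n-1}$ to an arbitrary infinite sequence and running $\Sigma$ to obtain an infinite play that player II wins. Since every initial segment of $s$ lies in $T$, player I breaks no rule in rounds $0, \ldots, n-1$, so if player II broke a rule in some round $i \le n-1$ then player II would be strictly the first to break a rule and would lose, contradicting that $\Sigma$ is winning. Hence player II is legal through round $n-1$, giving $h(s) \in U$ and matching labels; the case $s = \langle \rangle$ reduces to the fact that a winning strategy does not lose immediately, so the roots have equal labels. This shows $h$ is a homomorphism and completes the proof.
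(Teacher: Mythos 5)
Your proposal is correct and takes essentially the same approach as the paper: both directions use exactly the paper's mutually inverse translations, namely $\Sigma(\langle x_0,\ldots,x_n\rangle)$ defined as the last entry of $h(\langle x_0,\ldots,x_n\rangle)$, and $h(\langle x_0,\ldots,x_{n-1}\rangle) = \langle \Sigma(\langle x_0\rangle),\ldots,\Sigma(\langle x_0,\ldots,x_{n-1}\rangle)\rangle$. The only difference is that you spell out the verifications (in particular, converting the winning condition on infinite plays into legality of the finite play by extending player I's moves arbitrarily) that the paper dismisses as clear, which is fine.
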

\begin{proof}
 If $h : \mathcal{T} \to \mathcal{U}$ is a homomorphism, then the strategy $\Sigma$ for player II in the game $G(\mathcal{T},\mathcal{U})$ given by $\Sigma(\langle x_0,\ldots,x_n \rangle) = y_n$ where $h(\langle x_0,\ldots,x_n \rangle) = \langle y_0,\ldots,y_n\rangle$ is clearly a winning strategy. Conversely, if $\Sigma$ is a winning strategy for player II in $G(\mathcal{T},\mathcal{U})$, then there is a homomorphism $h : \mathcal{T} \to \mathcal{U}$ given by
 \[h(\langle x_0,\ldots,x_{n-1}\rangle) = \big\langle \Sigma (\langle x_0 \rangle), \Sigma (\langle x_0,x_1\rangle), \ldots, \Sigma( \langle x_0,\ldots,x_{n-1} \rangle) \big \rangle. \qedhere\]
\end{proof}

\begin{rem}
 A consequence of Lemma \ref{lem:strategy-hom-equiv} that we will not use in this article, but is nevertheless worth pointing out, is that the existence of a homomorphism between two $\lambda$-labeled trees is absolute to any transitive model $\langle M; \in\rangle$ of ZFC containing both of them, by closed game absoluteness (see Kechris and Moschovakis \cite[Section 9B]{kechris1978notes}).
\end{rem}

We will use an immediate consequence of Lemma \ref{lem:strategy-hom-equiv} given by the Gale--Stewart theorem, which says that closed games are \emph{determined}, meaning that one player or the other (but clearly not both) must have a winning strategy. This gives a ``positive'' criterion for the nonexistence of a homomorphism:

\begin{lem}\label{lem:no-strategy-hom-equiv}
 There is no homomorphism from $\mathcal{T}$ to $\mathcal{U}$ if and only if player I has a winning strategy in the game $G(\mathcal{T},\mathcal{U})$.
\end{lem}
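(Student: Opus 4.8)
The plan is to prove Lemma~\ref{lem:no-strategy-hom-equiv} by combining Lemma~\ref{lem:strategy-hom-equiv} with the determinacy of closed games. The key observation is that $G(\mathcal{T},\mathcal{U})$ is a closed game for player II, so the Gale--Stewart theorem applies: exactly one of the two players has a winning strategy.

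First I would argue that the game is indeed closed for player II, and hence determined. The payoff set for player II consists of all infinite plays in which neither player ever breaks a rule. Since each rule---tree membership and label-matching---is a condition referring only to finitely many moves $\langle x_0,\ldots,x_n\rangle$ and $\langle y_0,\ldots,y_n\rangle$, the set of plays won by player II is closed in the product topology: any play not won by II fails at some finite stage, and every extension of that finite stage is also a loss for II. (This closedness was already noted in the remarks following the definition of the game.) By the Gale--Stewart theorem, closed games are determined, so either player I or player II has a winning strategy, and clearly not both, since a play consistent with both would be won by each simultaneously.

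Next I would invoke Lemma~\ref{lem:strategy-hom-equiv} to translate the determinacy dichotomy into the desired statement. By that lemma, the existence of a homomorphism $\mathcal{T} \to \mathcal{U}$ is equivalent to player II having a winning strategy. Therefore the nonexistence of such a homomorphism is equivalent to player II \emph{not} having a winning strategy, which by determinacy is equivalent to player I having a winning strategy. This chain of biconditionals yields exactly the claimed equivalence.

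I do not expect any serious obstacle here, since this lemma is essentially a formal consequence of the preceding lemma together with a standard determinacy fact. The only point requiring a small amount of care is the verification that the game is genuinely closed for player II and hence falls under the scope of the Gale--Stewart theorem; once that is granted, the argument is a short logical deduction. The proof should therefore be only a sentence or two, simply citing Lemma~\ref{lem:strategy-hom-equiv} and the determinacy of closed games.
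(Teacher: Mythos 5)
Your proposal is correct and matches the paper's argument exactly: the paper also derives this lemma as an immediate consequence of Lemma~\ref{lem:strategy-hom-equiv} together with the Gale--Stewart theorem, having already noted that $G(\mathcal{T},\mathcal{U})$ is a closed game for player II. Your added verification that the payoff set is genuinely closed (each rule violation occurs at a finite stage) is a reasonable elaboration of a point the paper treats as evident.
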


Besides Lemma \ref{lem:no-strategy-hom-equiv}, the only other ingredient in our proof of Shelah's theorem will be a method of combining several strategies for different games.  This method is often used to prove consequences of the axiom of determinacy such as the first periodicity theorem (see Moschovakis \cite[Diagram 6B.5]{moschovakis2009descriptive}.)

\section{Proof of Shelah's theorem}

Let $\kappa$ be an infinite cardinal, let $\lambda$ be a nonzero cardinal, let $\langle \mathcal{T}_\alpha : \alpha < \kappa\rangle$ be a sequence of $\lambda$-labeled trees, and assume $\kappa \to (\omega)^{\mathord{<}\omega}_\lambda$. Assume toward a contradiction that for all $\alpha < \beta < \kappa$ there is no homomorphism from $\mathcal{T}_\alpha$ to $\mathcal{T}_\beta$. Then by Lemma \ref{lem:no-strategy-hom-equiv} we may choose a winning strategy $\Sigma_{\alpha\beta}$ for player I in the game $G(\mathcal{T}_\alpha, \mathcal{T}_\beta)$ for all $\alpha < \beta < \kappa$.
  
We will combine these strategies to define a function $f : [\kappa]^{\mathord{<}\omega} \to \lambda$ as follows. Given an increasing finite sequence of ordinals $\langle \alpha_0,\ldots,\alpha_n \rangle \in [\kappa]^{\mathord{<}\omega}$, we can play the strategies $\Sigma_{\alpha_i\alpha_{i+1}}$ for $i < n$ against each other to produce an $n \times n$ triangular array of moves. This is shown for $n = 3$ in Figure \ref{fig:figure1}, where the dashed arrows indicate copied moves (feeding the output of one strategy into another) and the solid arrows indicate application of the chosen strategies for player I.  The initial moves for player I are also provided by the chosen strategies.
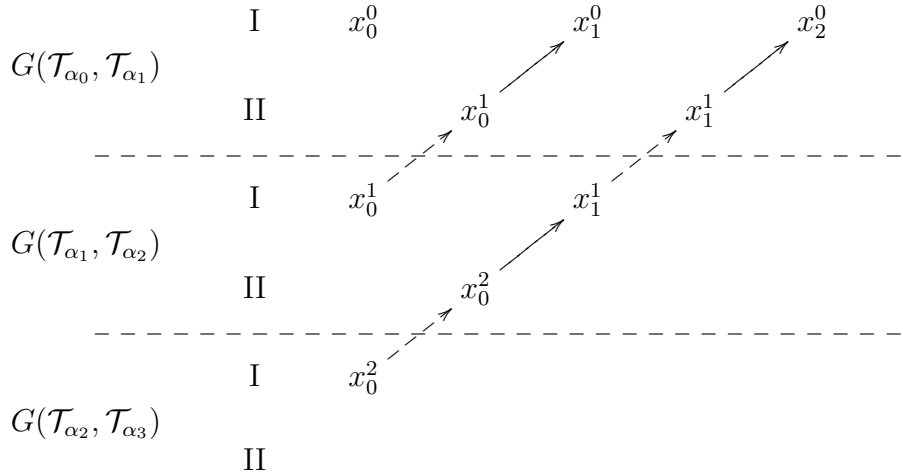
\begin{figure}[h]
 \begin{displaymath}
  \xymatrix@R-2.2pc{
                     & \text{I}  & x^0_0               &               & x^0_1               &               & x^0_2 & \\
   G(\mathcal{T}_{\alpha_0}, \mathcal{T}_{\alpha_1}) &&&               &                     &               &       & \\                                             
                     & \text{II} &                     & x^1_0\ar[uur] &                     & x^1_1\ar[uur] &       & \\
   \mbox{\vphantom{$G(\mathcal{T}_{\alpha_0})$}} \ar@{--}[rrrrrrr] &&& &                     &               &       & \\
                     & \text{I}  & x^1_0\ar@{-->}[uur] &               & x^1_1\ar@{-->}[uur] &               &       & \\
   G(\mathcal{T}_{\alpha_1}, \mathcal{T}_{\alpha_2}) &&&               &                     &               &       & \\                                               
                     & \text{II} &                     & x^2_0\ar[uur] &                     &               &       & \\
   \mbox{\vphantom{$G(\mathcal{T}_{\alpha_0})$}} \ar@{--}[rrrrrrr] &&& &                     &               &       & \\
                     & \text{I}  & x^2_0\ar@{-->}[uur] &               &                     &               &       & \\
   G(\mathcal{T}_{\alpha_2}, \mathcal{T}_{\alpha_3}) &&&               &                     &               &       & \\                                                
                     & \text{II} &                     &               &                     &               &       & \\                                               
  }
 \end{displaymath}
 \caption{Combining three strategies for player I}
 \label{fig:figure1}
\end{figure}

We call the sequence of ordinals  $\langle \alpha_0,\ldots, \alpha_n\rangle$ \emph{good} if all rules are followed in the resulting $n \times n$ triangular array.  In other words, player II has not yet lost, which implies that player I also has not yet lost because the array was generated by winning strategies for player I. As a trivial case, we consider every length-1 sequence $\langle \alpha \rangle$ to be good.

Note that a necessary and sufficient condition for the sequence $\langle \alpha_0,\ldots, \alpha_n\rangle$ to be good is that the roots of the trees $\mathcal{T}_{\alpha_0}, \ldots, \mathcal{T}_{\alpha_n}$ all have the same label and copying moves via the dashed arrows always satisfies the label-matching rule for player II.
(This condition is sufficient because each move is originally produced by a winning strategy for player I, and the tree membership rules are the same for both players.) 
 
If the sequence of ordinals $\langle \alpha_0,\ldots, \alpha_n\rangle$ is good then the sequence of moves $\langle x^0_0,\ldots, x^0_{n-1}\rangle$ obtained in the top row of the resulting $n \times n$ triangular array is a member of the tree $\mathcal{T}_{\alpha_0}$ and we may define  $f (\langle \alpha_0, \ldots, \alpha_n \rangle)$ as its label in that tree:
\begin{align*}%\label{eqn:defn-of-f}
 f (\langle \alpha_0, \ldots, \alpha_n \rangle) = \ell^{\mathcal{T}_{\alpha_0}}(\langle x^0_0,\ldots, x^0_{n-1}\rangle).
\end{align*}
As a trivial case, $f (\langle \alpha \rangle)$ is the label of the root of $\mathcal{T}_{\alpha}$.
 
If the sequence $\langle \alpha_0,\ldots, \alpha_n\rangle$ is not good, we arbitrarily define $f(\langle \alpha_0,\ldots, \alpha_n\rangle) = 0$ in order to obtain a total function $f : [\kappa]^{\mathord{<}\omega} \to \lambda$. (These arbitrary values will not be used.)

The partition relation $\kappa \to (\omega)^{\mathord{<}\omega}_{\lambda}$ implies that some infinite subset $H \subset \kappa$ is homogeneous for $f$. Replacing $H$ by an initial segment of itself if necessary, we may assume that $H$ has order type $\omega$ and let $\langle \alpha_0,\alpha_1,\alpha_2,\ldots \rangle$ be its increasing enumeration. Homogeneity implies a shift-invariance property for finite intervals in $H$:
\begin{align}\label{eqn:weak-homog}
 f(\langle \alpha_i,\ldots,\alpha_{n+i-1} \rangle) = f(\langle \alpha_{i+1},\ldots,\alpha_{n+i}\rangle)
\end{align}
for all $i,n < \omega$.\footnote{The statement that for every $f : [\kappa]^{\mathord{<}\omega} \to \lambda$ there is an infinite increasing sequence of ordinals with this shift-invariance property implies Silver's ``weak'' partition relation $\kappa \overset{w}{\to} (\omega)^{\mathord{<}\omega}_\lambda$ and can easily be proved equivalent to it.  Silver \cite{silver1970large} proved that $\kappa \to (\omega)^{\mathord{<}\omega}_\lambda$ itself is equivalent to $\kappa \overset{w}{\to} (\omega)^{\mathord{<}\omega}_\lambda$, but this is harder.}
We will use this to show that all finite intervals in $H$ are good.
 
\begin{claim}
 The sequence $\langle \alpha_i,\ldots,\alpha_{n+i} \rangle$ is good for all $i,n < \omega$.
\end{claim}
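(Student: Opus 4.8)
The plan is to prove the claim by induction on $n$, the base case $n=0$ being the convention that each length-one sequence $\langle \alpha_i\rangle$ is good. So I would fix $n \ge 1$, assume that every interval $\langle \alpha_j, \ldots, \alpha_{m+j}\rangle$ with $m < n$ is good (for all $j$), and deduce that $\langle \alpha_i, \ldots, \alpha_{n+i}\rangle$ is good for every $i$.

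Before the induction proper I would isolate a locality observation about the triangular arrays. Writing $x^j_k$ for the move at level $j$ and round $k$, the array is generated by the recursion $x^j_k = \Sigma_{\alpha_j \alpha_{j+1}}(\langle x^{j+1}_0, \ldots, x^{j+1}_{k-1}\rangle)$, so unwinding it shows that $x^j_k$ is computed from the strategies $\Sigma_{\alpha_{j'}\alpha_{j'+1}}$ with $j \le j' \le j+k$ alone, hence depends only on $\alpha_j, \ldots, \alpha_{j+k+1}$. Two consequences: first, the symbol $x^j_k$ denotes the same move in the array of any interval containing $\alpha_j, \ldots, \alpha_{j+k+1}$; second, deleting $\alpha_i$ from the front of $\langle \alpha_i, \ldots, \alpha_{n+i}\rangle$ changes neither the remaining games nor their numbers of rounds (these depend only on the last index), so the array of the tail $\langle \alpha_{i+1}, \ldots, \alpha_{n+i}\rangle$ is exactly the sub-array of levels $i+1, \ldots, n+i$. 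Since the tail has the form $\langle\alpha_{i+1}, \ldots, \alpha_{(n-1)+(i+1)}\rangle$, it is good by the inductive hypothesis; in particular the label-matching rule is already satisfied in every game except the top one $G(\mathcal{T}_{\alpha_i}, \mathcal{T}_{\alpha_{i+1}})$. By the necessary-and-sufficient condition noted just above the claim, together with homogeneity on singletons (which makes all the roots share a label), it therefore remains only to verify label-matching for the top game.

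The heart of the argument is to convert that label-matching condition into an instance of shift-invariance. For each round $k \le n-2$ of the top game I must check
\[ \ell^{\mathcal{T}_{\alpha_{i+1}}}(\langle x^{i+1}_0, \ldots, x^{i+1}_k\rangle) = \ell^{\mathcal{T}_{\alpha_i}}(\langle x^i_0, \ldots, x^i_k\rangle). \]
Both sides are top-row labels of shorter arrays: by locality the sequence $\langle x^i_0, \ldots, x^i_k\rangle$ is the top row of the array of $\langle \alpha_i, \ldots, \alpha_{i+k+1}\rangle$, and $\langle x^{i+1}_0, \ldots, x^{i+1}_k\rangle$ is the top row (at level $i+1$) of the array of $\langle \alpha_{i+1}, \ldots, \alpha_{i+k+2}\rangle$. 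These two intervals have length $k+2 \le n$, so both are good by the inductive hypothesis, and hence the definition of $f$ gives $\ell^{\mathcal{T}_{\alpha_i}}(\langle x^i_0, \ldots, x^i_k\rangle) = f(\langle \alpha_i, \ldots, \alpha_{i+k+1}\rangle)$ and $\ell^{\mathcal{T}_{\alpha_{i+1}}}(\langle x^{i+1}_0, \ldots, x^{i+1}_k\rangle) = f(\langle \alpha_{i+1}, \ldots, \alpha_{i+k+2}\rangle)$. The required equation thus becomes $f(\langle \alpha_i, \ldots, \alpha_{i+k+1}\rangle) = f(\langle \alpha_{i+1}, \ldots, \alpha_{i+k+2}\rangle)$, which is precisely the shift-invariance property \eqref{eqn:weak-homog}, completing the inductive step.

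I expect the main obstacle to be the bookkeeping in the locality observation and the identification of the $f$-values: one must be sure that the moves appearing in the big array genuinely coincide with the top rows of the smaller arrays whose labels $f$ records, and that deleting the front ordinal really produces a sub-array with unchanged round counts. Once these are pinned down, the shift-invariance of $f$ applies verbatim and the rest is immediate.
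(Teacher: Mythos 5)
Your proof is correct, and its engine is the same as the paper's: an induction on the interval length in which player II's copied (dashed-arrow) moves are legitimized by the shift-invariance \eqref{eqn:weak-homog} of $f$, and player I's new moves are automatically legal because the arrays are generated by winning strategies for player I. The difference lies in how you decompose the induction step. The paper uses only the ordinary induction hypothesis at length $n-1$: goodness of the head $\langle \alpha_i,\ldots,\alpha_{n+i-1}\rangle$ covers the upper-left subtriangle and goodness of the tail $\langle \alpha_{i+1},\ldots,\alpha_{n+i}\rangle$ covers the lower-left one, so the only new rule check is the single last copy into the top game, which is exactly \emph{one} instance of \eqref{eqn:weak-homog}. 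You instead use strong induction: tail goodness disposes of all lower games at once, and you then re-verify label-matching at \emph{every} round $k \le n-2$ of the top game, identifying each check with $f(\langle \alpha_i,\ldots,\alpha_{i+k+1}\rangle) = f(\langle \alpha_{i+1},\ldots,\alpha_{i+k+2}\rangle)$ via the $f$-values of shorter good intervals (all of length $k+2 \le n$, so available by your hypothesis). Both decompositions are valid, and your bookkeeping checks out: the round counts at level $j$ depend only on the distance to the last index, so deleting the front ordinal really does leave the sub-array intact, and for $k = n-2$ the two intervals you need are precisely the head and tail, still within the inductive hypothesis. The paper's version is leaner (one shift-invariance instance per step, plain rather than strong induction), while yours has the genuine virtue of making explicit the locality lemma — that $x^j_k$ is computed from $\Sigma_{\alpha_{j'}\alpha_{j'+1}}$ for $j \le j' \le j+k$ alone, hence depends only on $\alpha_j,\ldots,\alpha_{j+k+1}$, so that sub-arrays of the big array literally coincide with the arrays of subintervals — a fact the paper's notation quietly presupposes without proof.
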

\begin{proof}
 By induction on $n$. The case $n = 0$ is trivial. For the induction step, let $n \ge 1$. It suffices to show that if $\langle \alpha_i,\ldots,\alpha_{n+i-1} \rangle$ and $\langle \alpha_{i+1},\ldots,\alpha_{n+i}\rangle$ are good and the shift-invariance property \eqref{eqn:weak-homog} holds, then $\langle \alpha_i,\ldots,\alpha_{n+i} \rangle$ is good. We will show this using Figure \ref{fig:figure1} in the case $i = 0$ and $n = 3$.  (The general case is similar.)
   
 Goodness of $\langle \alpha_0,\alpha_1,\alpha_{2} \rangle$ means that all rules are followed in the upper-left $2\times 2$ subtriangle generated by $\Sigma_{\alpha_0\alpha_1}$ and $\Sigma_{\alpha_1\alpha_2}$, and goodness of $\langle \alpha_{1},\alpha_2,\alpha_{3}\rangle$ means that all rules are followed in the lower-left $2\times 2$ subtriangle  generated by $\Sigma_{\alpha_1\alpha_2}$ and $\Sigma_{\alpha_2\alpha_3}$. Then the shift-invariance property $f(\langle \alpha_0,\alpha_1,\alpha_{2} \rangle) = f(\langle \alpha_{1},\alpha_2,\alpha_{3}\rangle)$ means that $\ell_{\mathcal{T}_{\alpha_0}}(\langle x^0_0,x^0_1\rangle) = \ell_{\mathcal{T}_{\alpha_1}}(\langle x^1_0,x^1_1\rangle)$, so the label-matching rule for player II in the game $G(\mathcal{T}_{\alpha_0},\mathcal{T}_{\alpha_1})$ is satisfied when we copy the move $x^1_1$ along the dashed arrow. Finally the top-right element $x^0_2$ is given by a winning strategy for player I, so all rules are followed in the $3\times 3$ triangle and $\langle \alpha_0, \alpha_{1},\alpha_2,\alpha_{3}\rangle$ is good.
\end{proof}

The claim implies that when we play the strategies $\Sigma_{\alpha_i\alpha_{i+1}}$ for all $i < \omega$ against each other, infinitely extending Figure \ref{fig:figure1} as shown in Figure \ref{fig:figure2}, all rules are followed forever. This counts as a win for player II in each game $G(\mathcal{T}_{\alpha_i}, \mathcal{T}_{\alpha_{i+1}})$, contradicting our choice of $\Sigma_{\alpha_i\alpha_{i+1}}$ as a winning strategy for player I and completing the proof of the theorem. \qedhere
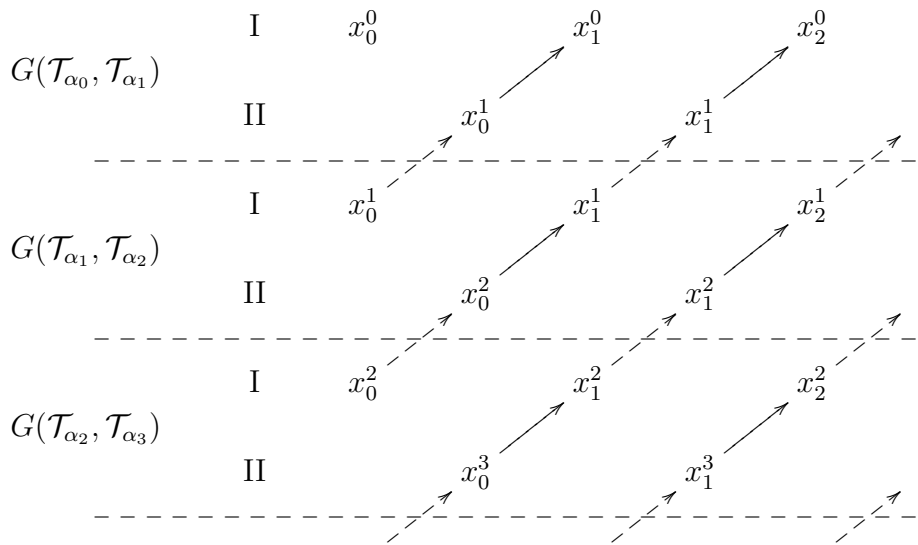
\begin{figure}[h]
 \begin{displaymath}
  \xymatrix@R-2.2pc{
                     & \text{I}  & x^0_0               &               & x^0_1               &               & x^0_2 & \\
   G(\mathcal{T}_{\alpha_0}, \mathcal{T}_{\alpha_1}) &&&               &                     &               &       & \\                                             
                     & \text{II} &                     & x^1_0\ar[uur] &                     & x^1_1\ar[uur] &       & \mbox{\phantom{$x^1_2$}}\\
   \mbox{\vphantom{$G(\mathcal{T}_{\alpha_0})$}} \ar@{--}[rrrrrrr] &&& &                     &               &       & \\
                     & \text{I}  & x^1_0\ar@{-->}[uur] &               & x^1_1\ar@{-->}[uur] &               & x^1_2\ar@{-->}[uur]      & \\
   G(\mathcal{T}_{\alpha_1}, \mathcal{T}_{\alpha_2}) &&&               &                     &               &       & \\                                               
                     & \text{II} &                     & x^2_0\ar[uur] &                     & x^2_1\ar[uur] &       & \mbox{\phantom{$x^2_2$}}\\
   \mbox{\vphantom{$G(\mathcal{T}_{\alpha_0})$}} \ar@{--}[rrrrrrr] &&& &                     &               &       & \\
                     & \text{I}  & x^2_0\ar@{-->}[uur] &               & x^2_1\ar@{-->}[uur] &               & x^2_2\ar@{-->}[uur]      & \\
   G(\mathcal{T}_{\alpha_2}, \mathcal{T}_{\alpha_3}) &&&               &                     &               &       & \\                                               
                     & \text{II} &                     & x^3_0\ar[uur] &                     & x^3_1\ar[uur] &       & \mbox{\phantom{$x^3_2$}}\\
   \mbox{\vphantom{$G(\mathcal{T}_{\alpha_0})$}} \ar@{--}[rrrrrrr] &&& &                     &               &       & \\
                     & \mbox{\phantom{\text{I}}}  & \mbox{\phantom{$x^3_0$}}\ar@{-->}[uur] &               & \mbox{\phantom{$x^3_1$}}\ar@{-->}[uur] &               & \mbox{\phantom{$x^3_2$}}\ar@{-->}[uur]      & \\                                             
  }
 \end{displaymath}
 \caption{Combining infinitely many strategies for player I}
 \label{fig:figure2}
\end{figure}

\bibliographystyle{plain}
\bibliography{labeled-trees-arxiv}

\end{document}